\theoremstyle{definition}
\newtheorem{definition}{Definition}[]
\newtheorem{setting}[definition]{Setting}
\theoremstyle{plain}
\newtheorem{theorem}[definition]{Theorem}
\newtheorem{corollary}[definition]{Corollary}
\newtheorem{proposition}[definition]{Proposition}
\newtheorem{lemma}[definition]{Lemma}
\newcommand{\norm}[1]{\left\lVert#1\right\rVert}
\begin{document}

\title{Deep Ritz revisited}

\author{
    Johannes M\"uller \\
    Max Planck Institute for Mathematics in the Sciences \\
    \texttt{jmueller@mis.mpg.de}
   \And
    Marius Zeinhofer \\
    University of Freiburg\\
    \texttt{marius.zeinhofer@mathematik.uni-freiburg.de}
}

\maketitle

\begin{abstract}
Recently, progress has been made in the application of neural networks to the numerical analysis of partial differential equations (PDEs) (cf. \cite{weinan2017deep}, \cite{weinan2018deep}). In the latter the variational formulation of the Poisson problem is used in order to obtain an objective function – a regularised Dirichlet energy – that was used for the optimisation of some neural networks. Although this approach showed good visual performance and promising empirical results it is lacking any convergence guarantees. In this notes we use the notion of \(\Gamma\)-convergence to show that ReLU networks of growing architecture that are trained with respect to suitably regularised Dirichlet energies converge to the true solution of the Poisson problem. We discuss how this approach generalises to arbitrary variational problems under certain universality assumptions of neural networks and see that this covers some nonlinear stationary PDEs like the \(p\)-Laplace. 
\end{abstract}

\keywords{Deep ReLU networks, $\Gamma$-Convergence, Poisson Problem, Variational Problems}

\tableofcontents

\section{Introduction}
Artificial neural networks play a key role in current machine learning research and both their performance in practice as well as numerous of their theoretical properties are studied extensively. After the intial success of neural networks in statistical learning problems \cite{krizhevsky2012imagenet} neural networks where used in a variety of fields like generative learning and reinforcement learning but also inverse problems and the numerical analysis of PDEs (cf. \cite{mccann2017convolutional} and \cite{weinan2017deep} respectively). Although partial differential equations are deterministic in nature, some of the deep learning based approaches to their numerical solution are similar to traditional learning problems. In fact, they use the Feynman-Kac formula to approximately evaluate the true solution via Monte-Carlo sampling and then use neural networks for regression of those evaluations (cf. \cite{berner2018analysis}). However, one can also use the variational formulation of PDEs for example for elliptic PDEs and use the associated variational energy as an objective function for the optimisation of the neural networks. In \cite{weinan2018deep} it was demonstrated that this approach yields appealing visual results and a short empirical convergence analysis was carried out. However, it does not provide any guarantees of convergence of this numerical framework for variational problems which is the subject of this work.

\subsection*{Main result}

We will present a version of our main result on the convergence of ReLU networks trained with respect to a regularised Dirichlet energy towards the solution of the Poisson problem and use the following notations. First, we will consider growing sets \(\Theta_n\) of neural networks that we will assume to be universal approximators in \(H^1_0(\Omega)\) and introduce objective functions
\[L_n\colon \Theta_n\to \mathbb R, \quad \theta\mapsto \frac12\int_\Omega \left(\left\lvert\nabla u_\theta \right\rvert^2 - fu_\theta\right)\mathrm dx + n\int_{\partial \Omega} u_\theta^2\mathrm ds, \]
where \(u_\theta\) is the neural network arising from the parameters \(\theta\) and \(f\in L^2(\Omega)\). Now, we assume that \((\theta_n)_{n\in\mathbb N}\) is a sequence of quasi-minimisers of the objective functions, meaning
\[L_n(\theta_n) \le \inf_{\theta\in\Theta_n} L_n + \delta_n, \]
where \(\delta_n\to0\). Then it follows from Theorem \ref{mainresult} that \((u_{\theta_n})_{n\in\mathbb N}\) converges to the true solution of the Poisson problem with right hand side \(f\) weakly in \(H^1(\Omega)\) and thus strongly in \(L^2(\Omega)\). Further, we extend this result to abstract variational problems which for example cover the \(p\)-Laplace operator.

\section{Notation and preliminaries}

\subsection{Preliminaries from neural network theory}

\begin{definition}[Neural network]
Let \(d, m, L\in\mathbb N\). A \emph{standard neural network} of depth \(L\) with \emph{input dimension \(d\)} and \emph{output dimension \(m\)} is a tupel 
\[\theta = \left( (A_1, b_1), \dots, (A_L, b_L)\right)\]
of matrix-vector pairs where \(A_l\in\mathbb R^{N_{l}\times N_{l-1}}\) and \(b_l\in\mathbb R^{N_l}\) and \(N_0 = d, N_L = m\). If \(L = 2\) we call the network \emph{shallow} and \emph{deep} otherwise.
\end{definition}

Every matrix vector pair \((A_l, b_l)\) induces an affine linear transformation that we denote with \(T_l\colon \mathbb R^{N_{l-1}} \to\mathbb R^{N_l}\). We refer to the pairs and also to the affine linear transformations as the \emph{layers} of the network and call the layers \(T_1, \dots, T_{L-1}\) the \emph{hidden layers} and say that \(W(\theta)\coloneqq\max_{l=0, \dots, L} N_l\) is the width of the network. Finally, we denote the set of networks of depth \(L\), width \(W\), input dimension \(d\) and output dimension \(m\) by \(\mathcal N\mathcal N_{L, W, d, m}\). We extend this notation and allow \(W=\infty\) and \(L=\infty\) where this means that the width or depth is finite but arbitrary.

\begin{definition}[Realisation of a neural network, \cite{petersen2018optimal}]
Let \(\theta\) be a neural network of depth \(L\) with input dimension \(d\) and output dimension \(m\) and let \(\rho\colon\mathbb R\to\mathbb R\) be a function. The \emph{realisation} of \(\theta\) with respect to the \emph{activation function \(\rho\)} is the function 
\[R_\rho(\theta)\colon\mathbb R^d\to\mathbb R^m, \quad x\mapsto T_L(\rho(T_{L-1}(\rho(\cdots \rho(T_1(x)))))). \]
If \(f\) is the realisation of a neural network \(\theta\) we say that the network \(\theta\) \emph{represents} the function \(f\).
\end{definition}

The distinction of a neural network and its realisation has proven to be immensely useful in the theory of neural networks and we will see how this can be used to obtain approximation results. However, we want to stress that the realisation map is also crucial for the optimisation of networks, since the objective function is usually the composition of the realisastion map and some objective function on a suitable function space and we refer to \cite{petersen2018topological} and \cite{berner2019degenerate}.

\begin{definition}[ReLU activation function]
The \emph{rectified linear unit} or \emph{ReLU activation function} is defined via
\[\rho\colon\mathbb R\to\mathbb R, \quad x\mapsto \max\left\{ 0, x\right\}.\]
\end{definition}

From now on we will only consider ReLU networks mainly because the realisations of ReLU networks are precisely the piecewise linear functions. This will be very well suited for the study of the Poisson problem where the classical approach relies on finite element functions which are again piecewise linear.

\begin{definition}[Piecewise linear function]\label{PiecewiseLinear}
We say a function \(f \colon\mathbb R^d \to\mathbb R\) is \emph{piecewise linear} if there exists a finite set of closed polyhedra whose union is \(\mathbb R^d\), and \(f\) is affine linear over each polyhedron. The \emph{number of linear regions of} \(f\) is the number of maximal connected subsets of \(\mathbb R^d\) over which \(f\) is affine linear.
\end{definition}

Note every piecewise linear functions is continuous by definition since the polyhedra are closed and cover the whole space \(\mathbb R^d\), and affine functions are continuous. 

\begin{theorem}[Universal representation, \cite{arora2016understanding}]
    Every realisation \(R\colon\mathbb R^d\to\mathbb R^m\) is a piecewise linear function. Conversely, every piecewise linear function \(f\colon\mathbb R^d\to\mathbb R^m\) can be represented by a ReLU network of depth at most \(\lceil \log_2(d+1)\rceil +1\).
\end{theorem}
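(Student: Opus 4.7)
For the forward direction, I would argue by induction on the depth $L$. The base ingredients are that affine maps are piecewise linear, that the ReLU $\rho$ is piecewise linear on $\mathbb{R}$ (and hence componentwise on $\mathbb{R}^N$), and that the composition of two piecewise linear functions is again piecewise linear: if $g$ is affine on polyhedra $\{P_i\}$ and $h$ is affine on polyhedra $\{Q_j\}$, then $h \circ g$ is affine on the finitely many closed polyhedra $\{P_i \cap g^{-1}(Q_j)\}$, using that the preimage of a polyhedron under an affine map is a polyhedron. Iterating this along the layer structure $T_L \circ \rho \circ T_{L-1} \circ \cdots \circ \rho \circ T_1$ yields the claim.

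For the converse, I would first reduce to scalar output $m = 1$ by placing $m$ scalar networks in parallel with block-diagonal weight matrices, a construction that preserves depth. The non-trivial ingredient is then the lattice representation theorem of Wang and Tarela--Mart\'{i}nez: every continuous piecewise linear function $f \colon \mathbb{R}^d \to \mathbb{R}$ admits a decomposition
\[
f(x) = \sum_{j=1}^{J} s_j \max_{i \in S_j} \ell_{j,i}(x),
\]
with affine functions $\ell_{j,i}$, signs $s_j \in \{-1, +1\}$, and $|S_j| \le d+1$. Granted this, the task reduces to computing a maximum of at most $d+1$ affine functions and then taking a signed linear combination. Using the identity $\max(a,b) = a + \rho(b-a)$, together with the standard trick $a = \rho(a) - \rho(-a)$ to propagate an untouched value through a ReLU layer, each pairwise maximum costs one hidden layer; arranging the pairwise maxima along a balanced binary tree computes $\max_{i \in S_j} \ell_{j,i}$ in $\lceil \log_2(d+1) \rceil$ hidden layers, i.e.\ depth $\lceil \log_2(d+1) \rceil + 1$. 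The outer signed sum over $j$ is affine and can be absorbed into the output layer of a parallel composition of the $J$ sub-networks without increasing the depth.

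The hard part is the Wang--Tarela--Mart\'{i}nez representation with summand arity bounded by $d+1$: this is a genuinely dimension-dependent geometric statement, morally reflecting that a generic vertex of the polyhedral complex carrying $f$ is incident to at most $d+1$ maximal cells, and it is the sole source of the $\lceil\log_2(d+1)\rceil$ factor. Everything else in the argument is routine network algebra, namely parallel composition, value propagation via $a = \rho(a) - \rho(-a)$, and balanced binary trees of pairwise maxima.
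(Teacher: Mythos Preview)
The paper does not actually prove this theorem; it explicitly defers to the cited reference \cite{arora2016understanding} for the scalar case $m=1$ and only remarks that the vector-valued case follows by parallelising networks (referring to \cite{petersen2018optimal} for the explicit construction). Your proposal is correct and is essentially the argument given in \cite{arora2016understanding}: the forward direction by closure of piecewise linear maps under composition, the reduction to $m=1$ by parallelisation, the Wang-type lattice representation $f=\sum_j s_j\max_{i\in S_j}\ell_{j,i}$ with $|S_j|\le d+1$, and the implementation of each $\max$ via a balanced binary tree of pairwise maxima using $\max(a,b)=a+\rho(b-a)$ together with the identity pass-through $a=\rho(a)-\rho(-a)$. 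So your sketch matches the referenced proof that the paper points to, with the parallelisation step being the one piece the paper itself comments on.
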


For the proof of this statement we refer to \cite{arora2016understanding} but shall note that they restrict their studies to \(m=1\). However, the statement above follows easily by building parallelised networks which is straight forward, but for an explicit construction of this parallelisation we refer to \cite{petersen2018optimal}.

One benefit of this result is that it explicitly characterises the range of the realisation map for ReLU networks. A lot of properties of neural networks only depend on their realisations and hence this approach can simplify the study of those. In particular, some approximation capabilities of piecewise linear functions can be transferred directly to ReLU networks. As an example we will present the following universality property of ReLU networks which will be crucial in the proof of the \(\Gamma\)-convergence later on. We will not present the proof since it is the direct consequence of the previous result, the approximation result for piecewise linear functions in the appendix and the density of smooth and compactly supported functions in the Sobolev space \(W^{1, p}_0(\Omega)\). Note, that similar approximation results – for example with non zero boundary values – can be obtained in analogue fashion.

\begin{theorem}[Universal approximation]\label{UniversalApproximation}
Let \(\Omega\subseteq\mathbb R^d\) be a bounded and open set and let \(p\in [1, \infty)\). For all functions \(f\in W^{1, p}_0(\Omega)\) and \(\varepsilon>0\) there is a realisation \(g\in W^{1, p}_0(\Omega)\) of a ReLU network of depth \(\lceil \log_2(d+1)\rceil +1\) such that \[\left\lVert f - g \right\rVert_{W^{1, p}(\Omega)}<\varepsilon.\]
\end{theorem}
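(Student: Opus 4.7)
The plan is to chain three ingredients, exactly as the authors foreshadow: density of smooth compactly supported functions in $W^{1,p}_0(\Omega)$, a quantitative piecewise linear approximation result from the appendix, and the exact representability of piecewise linear functions by ReLU networks of depth $\lceil \log_2(d+1)\rceil + 1$ provided by the Universal representation theorem.

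First I would pick $\phi \in C_c^\infty(\Omega)$ with $\|f - \phi\|_{W^{1,p}(\Omega)} < \varepsilon/2$, which is possible by the very definition of $W^{1,p}_0(\Omega)$ as the closure of $C_c^\infty(\Omega)$ in $W^{1,p}(\Omega)$. Writing $K := \operatorname{supp}(\phi)$, I would fix an open set $K'$ with $K \subset K'$ and $\overline{K'} \subset \Omega$, which is possible because $K$ is compact in the open set $\Omega$. Next I would invoke the piecewise linear approximation lemma from the appendix on a shape-regular simplicial mesh of $\overline{K'}$ of mesh size $h$, producing a continuous piecewise linear function $g\colon \mathbb{R}^d\to\mathbb{R}$ (the nodal Lagrange interpolant of $\phi$, extended by zero outside $K'$) satisfying an estimate of the form $\|\phi - g\|_{W^{1,p}(K')} \le C h \, \|\phi\|_{W^{2,p}(K')}$; choosing $h$ small enough yields $\|\phi - g\|_{W^{1,p}(\Omega)} < \varepsilon/2$. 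Finally, the Universal representation theorem produces a ReLU network $\theta$ of the claimed depth whose realisation equals $g$ on all of $\mathbb{R}^d$, and the triangle inequality closes the argument with total error below $\varepsilon$.

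The main subtlety, and what I expect to be the only delicate point, is ensuring that the approximating piecewise linear function actually lies in $W^{1,p}_0(\Omega)$, i.e.\ that its zero extension across $\partial K'$ is still a globally continuous piecewise linear function; nodal interpolation does not automatically preserve compact support in $\Omega$ when a simplex straddles $\partial(\operatorname{supp}\phi)$. The remedy is the buffer construction above: if $h$ is chosen smaller than $\operatorname{dist}(K, \partial K')$, no node carrying a nonzero value of $\phi$ lies on a simplex touching $\partial K'$, so $g$ vanishes on a neighborhood of $\partial K'$ and its zero extension is automatically a globally piecewise linear element of $W^{1,p}_0(\Omega)$. Once this boundary issue is handled, the remainder reduces to standard finite element interpolation estimates together with the exact representation result already established in the excerpt.
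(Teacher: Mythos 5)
Your proposal is correct and follows essentially the same route the paper indicates: density of $C_c^\infty(\Omega)$ in $W^{1,p}_0(\Omega)$, piecewise linear (nodal) interpolation on a fine triangulation with control of the support so the approximant stays compactly supported in $\Omega$, and then exact representation of the piecewise linear function by a ReLU network of the stated depth. Your buffer argument is precisely the support statement $\operatorname{supp}(s_\varepsilon)\subset\operatorname{supp}(\varphi)+B_\varepsilon(0)$ of the appendix lemma, and replacing the paper's uniform-continuity estimate by the standard $O(h)$ $W^{2,p}$ interpolation bound is an inessential variation.
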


\subsection{Reminder on $\Gamma$-convergence}
$\Gamma$-convergence is a notion of convergence for energy functionals that is tightly linked to the convergence of their minimisers and thus extremely well suited to rigorously establish approximation results.
We recall the definition of $\Gamma$-convergence, which we already specialise to the case of reflexive spaces with their associated weak topologies, and show one of its most important features: The convergence of quasi-minimisers of the $\Gamma$-converging sequence to the minimiser of the limit. For further reading we point the reader towards the extensive introduction \cite{dal2012introduction}.
\begin{definition}[\(\Gamma\)-convergence]
    Let \(X\) be a reflexive Banach space, \(F_n, F\colon X\to(-\infty, \infty]\), then \((F_n)_{n\in\mathbb N}\) is \emph{\(\Gamma\)-convergent} to \(F\) if the following properties are satisfied.
    \begin{enumerate}
        \item \emph{Liminf inequality:} For every \(x\in X\) and \((x_n)_{n\in\mathbb N}\) with \(x_n\rightharpoonup x\) we have
            \[F(x) \le \liminf_{n\to\infty} F_n(x_n).\]
        \item \emph{Recovery sequence:} For every \(x\in X\) there is \((x_n)_{n\in\mathbb N}\) with \(x_n\rightharpoonup x\) such that
            \[F(x) = \lim_{n\to\infty} F_n(x_n).\]
    \end{enumerate}
\end{definition}

\begin{proposition}[Limits of quasi-minimisers]\label{limits}
Let \(X\) be a reflexive Banach space and \((F_n)_{n\in\mathbb N}\) be a sequence of functionals that \(\Gamma\)-converges to \(F\). Let \((x_n)_{n\in\mathbb N}\) be a sequence of $\delta_n$-\emph{quasi-minimisers}, i.e.,
\[F_n(x_n) \le \inf F_n + \delta_n \quad \text{for all } n\in\mathbb N,\]
where \(\delta_n\to0\). If \(x_n\rightharpoonup x\), then \(x\) is a minimiser of \(F\).
\end{proposition}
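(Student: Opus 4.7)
The plan is to combine the two defining properties of $\Gamma$-convergence with the quasi-minimizing property in the most direct way. First, I would apply the liminf inequality to the given sequence $x_n \rightharpoonup x$ to obtain the lower bound
\[F(x) \le \liminf_{n\to\infty} F_n(x_n).\]
This gives one side of the minimization inequality and uses the weak convergence hypothesis on $(x_n)$.

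Next, I would pick an arbitrary competitor $y \in X$ and produce a recovery sequence $(y_n)$ with $y_n \rightharpoonup y$ and $F_n(y_n) \to F(y)$. The quasi-minimizing property of $x_n$ then yields
\[F_n(x_n) \le \inf F_n + \delta_n \le F_n(y_n) + \delta_n\]
for every $n$. Taking $\liminf$ on both sides, using $\delta_n \to 0$ and the fact that the right-hand side actually converges to $F(y)$, gives
\[\liminf_{n\to\infty} F_n(x_n) \le \lim_{n\to\infty} F_n(y_n) + \lim_{n\to\infty} \delta_n = F(y).\]
Chaining this with the liminf bound from the first step produces $F(x) \le F(y)$. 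Since $y \in X$ was arbitrary, $x$ minimizes $F$.

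There is no serious obstacle here: the proof is essentially a direct concatenation of the two defining inequalities of $\Gamma$-convergence with the quasi-minimizer bound. The only subtlety worth flagging is the asymmetry between $\liminf$ (used on the $(x_n)$ side, where we only have the lower semicontinuity of $F$) and $\lim$ (available on the recovery sequence side, where equality holds), together with the trivial observation that $\liminf(a_n + \delta_n) = \liminf a_n$ when $\delta_n \to 0$. No coercivity or compactness assumption is needed because weak convergence of $(x_n)$ is part of the hypothesis; the statement does not claim existence of such a limit, only that any such weak limit is automatically a minimizer.
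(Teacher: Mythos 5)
Your proposal is correct and follows essentially the same route as the paper: the liminf inequality applied to $(x_n)$, combined with a recovery sequence for an arbitrary competitor $y$ and the quasi-minimality bound (the paper merely routes the comparison through $\inf F_n$ explicitly, which is an immaterial difference). No gaps.
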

\begin{proof}
By the Liminf inequality we have
\[F(x) \le \liminf_{n\to\infty} F_n(x_n) \le \liminf_{n\to\infty} \big(\inf F_n + \delta_n\big) = \liminf_{n\to\infty} \big(\inf F_n\big) \le \inf F.\]
To see the last inequality, we fix \(y\in X\) and choose a recovery sequence \((y_n)_{n\in\mathbb N}\) and obtain
\[F(y) \ge \limsup_{n\to\infty} F_n(y_n) \ge \limsup_{n\to\infty} \big(\inf F_n\big) \ge \liminf_{n\to\infty} \big(\inf F_n\big).\]
Since \(y\) was arbitrary we can take the infimum of \(y\).
\end{proof}

\begin{definition}[Equi-coercivity]
    Let \(X\) be a reflexive Banach space. The sequence \(F_n\colon X\to(-\infty, \infty]\) is called \emph{equi-coercive} if for all \(r\in\mathbb R\) the set
        \[\bigcup_{n\in\mathbb N}\{F_n\le r\} \]
    is bounded in \(X\) (or equivalently relatively compact wrt the weak topology).
\end{definition}

\begin{corollary}[Convergence of quasi-minimisers]\label{MinimiserCorollary}
    Let \(X\) be a reflexive Banach space and \((F_n)_{n\in\mathbb N}\) be a sequence of equi-coercive functionals that \(\Gamma\)-converges to \(F\) which has a unique minimiser \(x\). Then every sequence \((x_n)_{n\in\mathbb N}\) of quasi-minimisers converges weakly to \(x\).
\end{corollary}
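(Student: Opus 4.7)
The plan is to combine equi-coercivity with Proposition~\ref{limits} and the standard subsequence principle for the weak topology. The main work is first to produce a uniform bound on $(x_n)_{n\in\mathbb N}$, and then to identify every weak accumulation point with the unique minimiser $x$ of $F$.

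For the boundedness step I would take a recovery sequence $(z_n)_{n\in\mathbb N}$ for $x$, which satisfies $z_n\rightharpoonup x$ and $F_n(z_n)\to F(x)$. Combining this with the quasi-minimiser property yields
\[F_n(x_n)\le \inf F_n + \delta_n\le F_n(z_n)+\delta_n,\]
and since the right hand side converges to $F(x)\in\mathbb R$, the sequence $(F_n(x_n))_{n\in\mathbb N}$ is bounded above by some $r\in\mathbb R$. Equi-coercivity then forces $(x_n)_{n\in\mathbb N}$ to be bounded in $X$, and by reflexivity every subsequence admits a further subsequence that converges weakly to some $\tilde x\in X$.

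For any such weakly convergent sub-subsequence $x_{n_k}\rightharpoonup\tilde x$ I would apply Proposition~\ref{limits}: the subsequence $(F_{n_k})_{k\in\mathbb N}$ still $\Gamma$-converges to $F$ (the liminf inequality and recovery sequences pass trivially to subsequences), and $(x_{n_k})_{k\in\mathbb N}$ still satisfies the quasi-minimiser inequality with $\delta_{n_k}\to 0$. Hence $\tilde x$ must be a minimiser of $F$, and uniqueness gives $\tilde x=x$. The standard subsequence principle in the Hausdorff weak topology — if every subsequence of $(x_n)$ admits a further subsequence converging weakly to the same point $x$, then $x_n\rightharpoonup x$ — now yields the claim.

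The only mildly subtle point is the boundedness step, which hinges on exhibiting a specific competitor via the recovery property; equi-coercivity alone is not enough, because the level sets $\{F_n\le r\}$ are only useful once we have identified an $r$ that dominates $F_n(x_n)$. The remainder is a purely formal application of Proposition~\ref{limits} and the subsequence principle.
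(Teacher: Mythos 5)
Your proposal is correct and follows essentially the same route as the paper: obtain a uniform upper bound on $F_n(x_n)$ from the quasi-minimiser property (you via a recovery sequence for $x$, the paper via the already-established inequality $\limsup_n\,\inf F_n\le\inf F$, which rests on the same recovery-sequence argument), then use equi-coercivity and reflexivity to extract weakly convergent subsequences, identify every weak limit point with $x$ through Proposition~\ref{limits} and uniqueness, and conclude by the subsequence principle. Your explicit remark that $\Gamma$-convergence and the quasi-minimiser property pass to subsequences is a welcome bit of extra care, but it is not a different method.
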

\begin{proof}
Let \((x_n)_{n\in\mathbb N}\) be a sequence of quasi-minimisers. Since we have seen
\(\limsup \big(\inf F_n\big) \le \inf F \)
we obtain
\[(x_n)_{n\in\mathbb N} \subseteq \bigcup_{n\in\mathbb N}\{F_n\le r\}\]
for \(r>\inf F\). Hence, by the equicoercivity \((x_n)_{n\in\mathbb N}\) is relatively weakly compact and since \(X\) is reflexive we are left to show that \(x\) is the only limit point of \((x_n)_{n\in\mathbb N}\). So assume we have \(x_{n_k}\rightharpoonup y\), then \(y\) minimses \(F\) by Proposition \ref{limits} and thus by the uniqueness of the minimser we obtain \(y=x\).
\end{proof}

\section{\(\Gamma\)-convergence for variational energies of deep networks}
In this section we prove our main result, namely the $\Gamma$-convergence for regularised Dirichlet energies on ReLU networks. Furthermore we show that this is in fact very robust and can easily be transferred to more general energies. Finally, as an example of the abstract formulation, we show how it can be applied to the $p$-Dirichlet energy which is associated to a non-linear PDE. 

    But before we proceed with the details we will give a quick overview over this approach to solve PDEs. For this, we consider the Poisson problem which is the prototype of an elliptic PDE and given by
    \begin{align*}
        -\Delta u & = f \quad \text{in } \Omega \\
        u & = 0 \quad \text{on } \partial\Omega.
    \end{align*}
    It is well known that for a function \(u\in H^1_0(\Omega)\) and some right hand side \(f\in H^1_0(\Omega)^\prime\) it is equivalent to be a weak solution of the Poisson problem or to be a solution of the variational problem
    \[ u\in \underset{v\in H^1_0(\Omega)}{\arg\min} \; \frac12\int_{\Omega} \left\lvert \nabla v\right\rvert^2 \mathrm dx - f(v). \]
    The minimised functional on the right hand side is called the \emph{Dirichlet energy} and we refer to \cite{struwe1990variational} for further properties like the existence of unique minimisers.
    By the universal approximation property realisations of ReLU networks with zero boundary values are dense in \(H^1_0(\Omega)\). Hence, one could minimise the Dirichlet energy over all such networks. However, contrary to the use of finite element functions it is not straight forward to enforce zero boundary value conditions for realisations of networks. The solution is to relax the problem and consider the realisations of arbitrary networks but to penalise boundary values. This approach was first taken by \cite{weinan2018deep} where an empirical justification for this method is presented. In the following we give a rigorous proof that the networks that are (quasi) optimised with respect to the regularised Dirichlet energy converge towards the true solution of the Poisson problem if the regularisation is increased and if the networks are universal approximators.

\subsection{\(\Gamma\)-convergence for Dirichlet energy}

    We begin by fixing the notation for this section.
    
    \begin{setting}
    For \(n\in\mathbb N\) we denote by $\Theta_n$ the set of neural networks with input dimension $d\in\mathbb N$, output dimension $1$, depth $\lceil \log_2(d+1) \rceil + 1$ of width at most $n$. Then we look at the realisations of the $\Theta_n$ as a subset in $H^1(\Omega)$ which we call $A_n$ i.e., we set $A_n = R_\rho(\Theta_n)\subset H^1(\Omega)$ where $\Omega\subset\mathbb R^d$ is some open, connected and bounded Lipschitz set and the activation function $\rho$ is ReLU. Note that the functions in $A_n$ are indeed members of $H^1(\Omega)$ as realisations of ReLU networks are of piecewise linear functions. 
    \end{setting}
    
    In the following let $f\in H^1(\Omega)'$, that is $f:H^1(\Omega)\to\mathbb R$ is linear and continuous, and define the sequence of functionals $F_n:H^{1}(\Omega)\to(-\infty,\infty]$
    \begin{align*}
        F_n(u) = \begin{cases}\; \displaystyle \frac12 \int_\Omega |\nabla u|^2\mathrm dx + n\int_{\partial\Omega}u^2\mathrm ds - f(u) &u\in A_n, \\[.4cm] \;\infty   &u\notin A_n. \end{cases}
    \end{align*}
    Further, we define the Dirichlet energy $F:H^1(\Omega)\to(-\infty,\infty]$ with zero boundary values
    \begin{align*}
        F(u) = \begin{cases} \; \displaystyle \frac12 \int_\Omega |\nabla u|^2\mathrm dx - f(u) &u\in H^1_0(\Omega), \\[.4cm] \; \infty   &u\in H^1(\Omega)\setminus H^1_0(\Omega)\end{cases}
    \end{align*}
    and we will show in the remainder of the section that the sequence \((F_n)_{n\in\mathbb N}\) of functionals \(\Gamma\)-converges towards \(F\).
    Note that by the Hahn-Banach theorem every functional in $H^1_0(\Omega)'$ can be extended to some $f\in H^1(\Omega)'$. Having cleared our setting we turn to $\Gamma$-convergence results. 
    \begin{theorem}[\(\Gamma\)-convergence]
    The sequence \((F_n)_{n\in\mathbb N}\) of functionals \(\Gamma\)-converges to the Dirichlet energy \(F\) in the weak topology of \(H^1(\Omega)\).
    \end{theorem}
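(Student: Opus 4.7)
The plan is to verify the two \(\Gamma\)-convergence conditions separately, leaning on two analytic facts: compactness of the trace embedding \(H^1(\Omega) \hookrightarrow L^2(\partial\Omega)\), available since \(\Omega\) is a bounded Lipschitz domain, and the universal representation of piecewise linear functions by ReLU networks of depth \(\lceil \log_2(d+1) \rceil + 1\).

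For the liminf inequality, let \(u_n \rightharpoonup u\) in \(H^1(\Omega)\). I may assume \(\liminf_n F_n(u_n) < \infty\) and extract a subsequence along which \(u_n \in A_n\) and \(F_n(u_n) \le C\). Since weak convergence bounds \(\|u_n\|_{H^1(\Omega)}\) and hence \(|f(u_n)|\), the boundary term satisfies \(n \int_{\partial\Omega} u_n^2\, ds \le C'\), so \(\|u_n\|_{L^2(\partial\Omega)} \to 0\). Compactness of the trace upgrades weak \(H^1\)-convergence of the \(u_n\) to strong convergence of their traces in \(L^2(\partial\Omega)\), which identifies \(u|_{\partial\Omega} = 0\); hence \(u \in H^1_0(\Omega)\) and \(F(u)\) is finite. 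Dropping the nonnegative boundary penalty and invoking weak lower semicontinuity of the Dirichlet integral together with continuity of \(f\) then yields \(F(u) \le \liminf_n F_n(u_n)\).

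For the recovery sequence, fix \(u \in H^1(\Omega)\). If \(u \notin H^1_0(\Omega)\), then the trivial constant sequence \(u_n \equiv u\) works: either \(u \notin A_n\) and \(F_n(u) = \infty\) identically, or \(u \in A_n\) for all large \(n\) and \(n \int_{\partial\Omega} u^2\, ds \to \infty\) since the trace of \(u\) is nontrivial; in either case \(F_n(u) \to \infty = F(u)\). The substantive case is \(u \in H^1_0(\Omega)\). Here I first approximate \(u\) by \(\phi_k \in C_c^\infty(\Omega)\) within \(1/k\) in \(H^1\), then approximate each \(\phi_k\) by a continuous piecewise linear function \(h_k\) compactly supported in a closed polyhedron strictly interior to \(\Omega\) (a standard finite element interpolation on a sufficiently fine triangulation of such a polyhedron), again within \(1/k\) in \(H^1\); crucially such \(h_k\) \emph{vanishes identically on \(\partial\Omega\)}. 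By the universal representation theorem, \(h_k\) is the realisation of a ReLU network of the prescribed depth and some finite width \(w_k\). Setting \(u_n := h_{k(n)}\) with \(k(n) := \max\{k : w_k \le n\}\) yields \(u_n \in A_n\), strong \(H^1\)-convergence \(u_n \to u\), and \(u_n|_{\partial\Omega} \equiv 0\); the boundary penalty then vanishes identically, while strong \(H^1\)-convergence handles the Dirichlet and linear terms, giving \(F_n(u_n) \to F(u)\).

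The main technical point is the boundary penalty in the recovery sequence: a naive best-approximation of \(u\) would require \(\|u_n - u\|_{H^1}^2 = o(1/n)\) to defeat the growing weight \(n\), a rate that Theorem \ref{UniversalApproximation} does not supply. The resolution above — forcing the recovery sequence to vanish identically on \(\partial\Omega\) by taking realisations of piecewise linear functions supported in strictly interior polyhedra, and only then invoking a diagonal selection in \(n\) via the nesting \(A_n \subseteq A_{n+1}\) — sidesteps the rate issue entirely and is the key ingredient of the argument.
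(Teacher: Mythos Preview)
Your proof is correct and follows essentially the same approach as the paper's: for the liminf inequality you control the trace and then invoke weak lower semicontinuity of the Dirichlet integral (you use compactness of the trace embedding where the paper uses only weak continuity of the trace plus weak lower semicontinuity of the \(L^2(\partial\Omega)\)-norm, a cosmetic difference), and for the recovery sequence you approximate within \(A_n\cap H^1_0(\Omega)\), which is precisely the content of Theorem~\ref{UniversalApproximation}. Your explicit construction via compactly supported piecewise linear interpolants and a diagonal selection simply unwraps the proof of that theorem and the appendix lemma.
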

    \begin{proof}
        We start by checking the liminf inequality. We first assume that $u\notin H^{1,2}_0(\Omega)$. By linearity and continuity of the trace operator it follows that $\operatorname{tr}(u_n)\rightharpoonup\operatorname{tr}(u)$ in $L^2(\partial \Omega)$ and by assumption $\operatorname{tr}(u)\neq 0$. Using the weak semicontinuity of $\norm{\,\cdot\,}^2_{L^2(\partial\Omega)}$ it follows that
        \begin{align*}
            \liminf_{n\to\infty}
            \norm{\operatorname{tr}(u_n)}^2_{L^2(\partial\Omega)}
            \geq
            \norm{\operatorname{tr}(u)}^2_{L^2(\partial\Omega)}
            >0.
        \end{align*}
        and therefore clearly
        \[
            \liminf_{n\to\infty}
            \big(
            n\norm{\operatorname{tr}(u_n)}^2_{L^2(\partial\Omega)}
            \big)
            =
            \infty.
        \]
        Using these facts we establish the liminf inequality in this case 
        \begin{align*}
            \liminf_{n\to\infty}F_n(u_n)
            \geq
            \liminf_{n\to\infty} n\norm{\operatorname{tr}(u_n)}^2_{L^2(\partial\Omega)}
            -
            \lim_{n\to\infty}f(u_n)
            =
            \infty
            =
            F(u).
        \end{align*}
        Now we treat the case $u\in H^{1}_0(\Omega)$. Then using weak lower semi-continuity we find
        \begin{align*}
            \liminf_{n\to\infty}F_n(u_n)
            &\geq
            \liminf_{n\to\infty}\frac12\int_\Omega|\nabla u_n|^2\,\mathrm dx - f(u)
            \\
            &\geq
            \frac12\int_\Omega |\nabla u|^2\,\mathrm dx
            -
            f(u)
            =
            F(u).
        \end{align*}
        We are left to construct a recovery sequence. Assume that $u$ does not have zero boundary conditions. Then just choose $u_n = u$ for all $n$. Otherwise assume $u$ lies in $H^{1}_0(\Omega)$. By Theorem \ref{UniversalApproximation} there is a sequence $(u_n)_{n\in\mathbb N}$ such that $u_n\in A_n\cap H^1_0(\Omega)$ and $u_n\to u$ strongly in $H^1(\Omega)$ and hence it follows \[ F_n(u_n) = \frac12\int_\Omega|\nabla u_n|^2\,\mathrm dx -f(u_n) \to F(u). \]
    \end{proof}

Now we check the requirements of Corollary \ref{MinimiserCorollary} which will yield the the convergence of quasi-minimisers.

    \begin{lemma}
        The functional $F:H^1(\Omega)\to(-\infty, \infty]$ as defined above has a unique minimiser.
        \begin{proof}
        The existence follows by the direct method of the calculus of variations \cite{struwe1990variational} and the uniqueness by the strict convexity of $F$.
        \end{proof}{}
    \end{lemma}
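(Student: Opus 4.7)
The plan is to follow the two-part outline the authors have already indicated: existence via the direct method of the calculus of variations, and uniqueness via strict convexity. Since the lemma is essentially a textbook fact about the Dirichlet energy, the main task is to verify that the standard machinery applies cleanly in this setting.

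For existence, I would first restrict attention to $H^1_0(\Omega)$, since $F \equiv \infty$ outside this subspace. The goal is to show that $F|_{H^1_0(\Omega)}$ is coercive and weakly lower semi-continuous, and then invoke the direct method. For coercivity, the Poincar\'e inequality (valid because $\Omega$ is bounded and we have zero boundary trace) tells us that $\|\nabla u\|_{L^2}$ is equivalent to $\|u\|_{H^1}$ on $H^1_0(\Omega)$, so
\begin{equation*}
F(u) \ge \tfrac12 \|\nabla u\|_{L^2}^2 - \|f\|_{H^1(\Omega)'} \|u\|_{H^1(\Omega)},
\end{equation*}
which grows to $\infty$ as $\|u\|_{H^1} \to \infty$. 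Weak lower semi-continuity follows because $u \mapsto \tfrac12 \int_\Omega |\nabla u|^2$ is convex and strongly continuous on $H^1_0(\Omega)$, hence weakly lower semi-continuous, while $u \mapsto f(u)$ is weakly continuous by definition of the dual pairing. A minimising sequence is then bounded in $H^1_0(\Omega)$ by coercivity, admits a weakly convergent subsequence by reflexivity, and the weak limit realises the infimum by weak lower semi-continuity.

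For uniqueness, I would argue that $F$ is strictly convex on $H^1_0(\Omega)$. The map $u \mapsto \tfrac12\|\nabla u\|_{L^2}^2$ is strictly convex on $H^1_0(\Omega)$: expanding the square on a nontrivial convex combination $tu + (1-t)v$ with $u \ne v$ produces a strict inequality because $\nabla u \ne \nabla v$ (again using Poincar\'e, $u \ne v$ in $H^1_0(\Omega)$ implies $\nabla u \ne \nabla v$ in $L^2$). Adding the affine term $-f(u)$ preserves strict convexity. If $u_1$ and $u_2$ were two minimisers, evaluating $F$ at $\tfrac12(u_1+u_2)$ would yield a value strictly smaller than $\min F$, a contradiction.

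The only subtlety I anticipate is making sure the functional is well-defined and finite somewhere in $H^1_0(\Omega)$ (it is, e.g.\ at $u = 0$ where $F(0) = 0$), so the infimum is finite and the direct method actually produces a minimiser rather than being vacuous. Neither step is hard; the lemma is really a sanity check that the $\Gamma$-limit identified in the previous theorem does admit the unique minimiser required by Corollary \ref{MinimiserCorollary}.
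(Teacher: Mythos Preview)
Your proposal is correct and follows precisely the same route as the paper: existence by the direct method (coercivity via Poincar\'e, weak lower semi-continuity of the convex gradient term plus the linear term, reflexivity) and uniqueness by strict convexity of $F$. The paper's proof is simply the one-line citation of this standard argument, so you have spelled out exactly what the authors left implicit.
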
{}
    
 Before we turn towards the equi-coercity we define the functional \[G_n(u) = \frac12 \int_\Omega |\nabla u|^2\,\mathrm dx + n\int_{\partial\Omega}u^2\,ds - f(u)\] hence $G_n$ agrees with $F_n$ where $F_n\neq \infty$. Note that it trivially holds
    \[
        F_n 
        =
        G_n + \chi_{A_n}
    \]
    where $\chi_{A_n}=0$ on $A_n$ and $\chi_{A_n}=\infty$ otherwise. We also remark that
    \[
        F_n \geq G_n \geq G_1.
    \]
    This means to show the equi-coercivity of $(F_n)_{n\in\mathbb N}$ it suffices to prove the coercity of the single functional $G_1$.
The following lemma uses a classical compactness argument to establish a Poincaré type inequality (cf. \cite{alt1992linear}).
\begin{lemma}
    Let $r>0$ be fixed and consider the set $M \coloneqq \{ G_1\leq r \}\subset H^{1}(\Omega)$ i.e., the functions in $H^{1}(\Omega)$  that satisfy
    \begin{align}\label{InequalityM}
        \frac12\int_\Omega |\nabla u|^2\,\mathrm dx
        +
        \int_{\partial\Omega}u^2\,ds
        -
        f(u)
        \leq
        r.
    \end{align}
    Those functions satisfy a Poincar\'e type inequality of the form
    \begin{align}\label{Poincare}
        \norm{u}_{L^2}
        \leq
        C(\norm{\nabla u}_{L^2} + 1),
    \end{align}
    where \(C\) only depends on \(r\) and \(\Omega\).
    \begin{proof}
        The proof consists of two steps. First we will show that the inequality (\ref{InequalityM}) implies that $M$ cannot contain arbitrarily large, constant functions and second we prove that a failure of the Poincar\'e inequality (\ref{Poincare}) means that $M$ contains any large, constant function hence the assertion follows.
        \ \\ \ \\
        \textbf{Step 1.} Let $\xi\in\mathbb{R}$ be a constant function in $M$. We will show that there is some fixed $C>0$ depending only on $r,\norm{f}_{H^1(\Omega)'}$ and $|\partial\Omega|$ such that 
        \[
            |\xi|\leq C.
        \] 
        Using a scaled version of Young's inequality with $\varepsilon |\Omega|^{1/2} \leq |\partial\Omega|/2$ we compute
        \begin{align*}
            r
            \geq
            \int_{\partial\Omega}\xi^2\,ds
            -
            f(\xi)
            &\geq
            |\xi|^2|\partial\Omega|
            -
            \norm{f}_{H^1(\Omega)'}\norm{\xi}_{H^1(\Omega)}
            \\
            &=
            |\xi|^2|\partial\Omega| 
            -
            \norm{f}_{H^1(\Omega)'}|\Omega|^{1/2}|\xi|
            \\
            &\geq
            |\xi|^2|\partial\Omega|
            -
            C(\varepsilon)\norm{f}^2_{H^1(\Omega)'}
            -\varepsilon|\Omega|\cdot|\xi|^2
            \\
            &\geq
            \frac12|\partial\Omega|\,|\xi|^2
            -
            C(\varepsilon)\norm{f}^2_{H^1(\Omega)'}.
        \end{align*}
        Thus we can solve for $|\xi|$ and find a uniform bound in terms of $r,\norm{f}_{H^1(\Omega)'}$ and $|\partial\Omega|$.
        \ \\ \ \\ 
        \textbf{Step 2.} Now we assume that the inequality fails and will show that this implies that $M$ contains arbitrarily large constant functions. Assume therefore that there is a sequence $(u_k)\subset M$ such that
        \[
            \norm{\nabla u_k}_{L^2} + 1
            \leq
            \frac1k\norm{u_k}_{L^2}.
        \]
        This inequality implies that $\norm{u_k}_{L^2}\to\infty$ and hence for every large but fixed $R>0$ we may assume that $\norm{u_k}_{L^2}^{-1} R\leq 1$ and set $v_k = u_k (R\norm{u_k}_{L^2}^{-1})$. By the star shape of $M$ the $v_k$ are a sequence in $M$ and the above inequality yields upon multiplying 
        \begin{align}\label{GradientInequality}
            \norm{\nabla v_k}_{L^2}
            +
            \frac{R}{\norm{u_k}_{L^2}}
            \leq
            \frac{R}{k}
            \to 0.
        \end{align}
        As $\norm{v_k}=R$ and $(\ref{GradientInequality})$ implies a bound on $\norm{\nabla v_k}_{L^2}$ we extract a weakly $H^{1,2}(\Omega)$ convergent subsequence $v_j\rightharpoonup v$ with limit $v$ in $M$ by the weak closedness of $M$. Also from (\ref{GradientInequality}) we deduce that
        \[
            \nabla v_j \rightharpoonup \nabla v = 0 
            \quad\text{weakly in }L^2(\Omega)^n
        \]
        and thus there is a constant $\xi\in\mathbb{R}$ such that $v(x)=\xi$ up to a set of measure zero in $\Omega$. To identify $\xi$ we employ the Rellich compactness theorem which yields that $v_j\to v$ strongly in $L^2(\Omega)$ and together with $\norm{v_j}_{L^2}=R$ we conclude
        \[
            R 
            = 
            \norm{v}_{L^2} 
            =
            \norm{\xi}_{L^2} 
            =
            |\Omega|^{1/2}|\xi|
        \]
        and as $R>0$ was arbitrary this shows that $M$ contains any large, constant function which manifests the desired contradiction.
    \end{proof}
\end{lemma}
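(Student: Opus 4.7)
The plan is to argue by contradiction following the classical compactness strategy for Poincar\'e-type inequalities. The only obstruction to the ordinary Poincar\'e inequality on $H^1(\Omega)$ is the presence of nonzero constants, and the boundary penalty $\int_{\partial\Omega} u^2\,\mathrm ds$ built into $G_1$ is designed precisely to rule them out. Accordingly, I would split the argument into two parts: first show that $M$ cannot contain arbitrarily large constants, then show that a failure of the asserted inequality would force such constants to appear.

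\emph{Constants in $M$ are uniformly bounded.} For a constant function $\xi \in \mathbb R$ the gradient term vanishes, so $G_1(\xi) = |\xi|^2 |\partial\Omega| - f(\xi)$. Bounding the linear term by $|f(\xi)| \le \norm{f}_{H^1(\Omega)'} |\Omega|^{1/2} |\xi|$ and applying a scaled Young inequality to absorb this into the quadratic boundary contribution yields $\tfrac12 |\partial\Omega|\,|\xi|^2 \le r + C \norm{f}_{H^1(\Omega)'}^2$, hence a uniform bound $|\xi| \le C(r,\Omega,\norm{f}_{H^1(\Omega)'})$.

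\emph{Contradiction via rescaling.} If the claimed inequality fails, there exists a sequence $(u_k) \subset M$ with $\norm{\nabla u_k}_{L^2} + 1 \le k^{-1}\norm{u_k}_{L^2}$, which forces $\norm{u_k}_{L^2} \to \infty$. Fix an arbitrary $R > 0$, set $t_k := R/\norm{u_k}_{L^2} \in (0,1]$ for large $k$, and put $v_k := t_k u_k$. By construction $\norm{v_k}_{L^2} = R$ and $\norm{\nabla v_k}_{L^2} \le R/k \to 0$, so $(v_k)$ is bounded in $H^1(\Omega)$. Passing to a subsequence, $v_k \rightharpoonup v$ weakly in $H^1$; since $\nabla v_k \to 0$ strongly in $L^2$ we obtain $\nabla v = 0$, so $v \equiv \xi$ is constant. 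The Rellich--Kondrachov theorem gives $v_k \to v$ strongly in $L^2$, hence $R = \norm{v}_{L^2} = |\xi|\,|\Omega|^{1/2}$. Provided $v$ belongs to a sub-level set of $G_1$ to which the first step applies, sending $R \to \infty$ contradicts that step.

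\emph{Main obstacle.} The delicate point is ensuring the rescaled $v_k$ actually lie in $M$, or at worst in a uniformly enlarged sub-level set $\{G_1 \le r'\}$. Because $G_1$ carries the linear term $-f(u)$ it is not $2$-homogeneous, so $M$ need not be star-shaped at the origin. Expanding $G_1(tu) = t^2 G_1(u) + t(t-1) f(u)$ shows the correction has the favourable sign whenever $f(u) \ge 0$ and $t \in [0,1]$; in the general case I would first deduce an a priori $H^1$-bound on $u_k$ from $G_1(u_k) \le r$ (controlling $f(u_k)$ linearly in $\norm{u_k}_{H^1}$), then verify that $t_k(t_k-1) f(u_k)$ stays bounded since $t_k \norm{u_k}_{H^1}$ does. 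One then closes the argument using weak closedness of the enlarged sub-level set, which follows from convexity and weak lower semicontinuity of $G_1$.
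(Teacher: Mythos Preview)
Your approach mirrors the paper's proof almost exactly: the same two-step scheme (bound constants in $M$ via Young's inequality, then rescale a would-be counterexample sequence and pass to a weak limit that must be an arbitrarily large constant). The only divergence is your ``main obstacle'', where you worry that $M$ might fail to be star-shaped at the origin because $G_1$ is not $2$-homogeneous.

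This worry is unfounded, and the detour you sketch is unnecessary. The functional $G_1$ is convex---it is the sum of two nonnegative quadratic forms and a linear term---so its sublevel set $M=\{G_1\le r\}$ is convex. Since $r>0$ and $G_1(0)=0$, the origin lies in $M$, and a convex set containing the origin is automatically star-shaped there; equivalently, for $u\in M$ and $t\in[0,1]$ convexity gives $G_1(tu)\le tG_1(u)+(1-t)G_1(0)=tG_1(u)\le r$. The paper simply asserts ``by the star shape of $M$'' without spelling this out, but that is the underlying reason.

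Your alternative route---showing $t_k(t_k-1)f(u_k)$ stays bounded and working in an enlarged sublevel set---would also succeed, since $t_k\norm{u_k}_{H^1}$ is bounded by roughly $R$ thanks to $\norm{\nabla u_k}_{L^2}\le k^{-1}\norm{u_k}_{L^2}$. One caution, though: your phrase ``first deduce an a priori $H^1$-bound on $u_k$ from $G_1(u_k)\le r$'' reads as circular, since bounding $M$ in $H^1$ is essentially the goal of the whole lemma (and the proposition that follows it). Fortunately you do not actually need that bound, only the bound on $t_k\norm{u_k}_{H^1}$, which follows directly from the assumed failure of the Poincar\'e inequality.
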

\begin{proposition}
    The sequence $(F_n)$ is equicoercive i.e., for every $r\in\mathbb{R}$ there is a bounded set $K_r\subset H^{1,2}(\Omega)$ such that
    \[
        \bigcup_{n\in\mathbb{N}}\{ F_n \leq r \}
        \subset
        K_r.
    \]
\end{proposition}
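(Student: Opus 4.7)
The plan is to reduce equicoercivity of the whole sequence to coercivity of the single functional $G_1$ and then to use the Poincar\'e-type inequality just proved. The paper has already noted the pointwise bound $F_n \ge G_n \ge G_1$ for all $n$, which gives the inclusion
\[
    \bigcup_{n\in\mathbb N}\{F_n \le r\} \subseteq \{G_1 \le r\}
\]
for every $r\in\mathbb R$. Hence it suffices to exhibit a bounded set $K_r \subset H^1(\Omega)$ containing $\{G_1 \le r\}$.

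Fix $u \in \{G_1 \le r\}$. I would first discard the non-negative boundary term to obtain
\[
    \tfrac12 \|\nabla u\|_{L^2}^2 \;\le\; r + f(u) \;\le\; r + \|f\|_{H^1(\Omega)'} \|u\|_{H^1(\Omega)}.
\]
The preceding lemma provides a constant $C=C(r,\Omega)$ (actually also depending on $\|f\|_{H^1(\Omega)'}$) with
\[
    \|u\|_{L^2} \;\le\; C\bigl(\|\nabla u\|_{L^2} + 1\bigr),
\]
so that $\|u\|_{H^1(\Omega)}^2 \le (C^2+1)\|\nabla u\|_{L^2}^2 + 2C^2 \cdot \text{const}$, i.e.\ the full $H^1$-norm is controlled by the gradient norm up to an additive constant depending only on $r$, $\Omega$ and $\|f\|_{H^1(\Omega)'}$.

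I would then combine these two estimates via Young's inequality in the form $\|f\|_{H^1(\Omega)'} \|u\|_{H^1(\Omega)} \le \varepsilon \|u\|_{H^1(\Omega)}^2 + C(\varepsilon)\|f\|_{H^1(\Omega)'}^2$, choosing $\varepsilon$ small enough (depending only on $C$) so that the resulting $\|\nabla u\|_{L^2}^2$ term on the right can be absorbed into the $\tfrac12 \|\nabla u\|_{L^2}^2$ on the left. This yields a uniform bound on $\|\nabla u\|_{L^2}$ and, feeding it back into the Poincar\'e inequality of the lemma, a uniform bound on $\|u\|_{L^2}$. Setting $K_r$ to be the resulting closed $H^1$-ball produces the desired equi-coercive sublevel control.

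There is no real obstacle here; the work was already done in the previous lemma. The only delicate point is to track constants in the final Young's inequality step so that the absorption of $\|\nabla u\|_{L^2}^2$ into the left-hand side is legitimate, but this is routine once the constant $C$ of the Poincar\'e-type inequality is treated as fixed. Together with Corollary~\ref{MinimiserCorollary} and the preceding $\Gamma$-convergence theorem, this proposition then immediately delivers weak $H^1$-convergence of the quasi-minimisers $u_{\theta_n}$ to the weak solution of the Poisson problem.
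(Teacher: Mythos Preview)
Your proof is correct and follows essentially the same route as the paper: reduce to boundedness of $\{G_1\le r\}$ via $F_n\ge G_1$, drop the non-negative boundary term, invoke the preceding Poincar\'e-type lemma to control $\|u\|_{H^1}$ by $\|\nabla u\|_{L^2}$, and absorb the $f(u)$ contribution with Young's inequality. The only cosmetic difference is the order in which you apply Young's inequality and substitute the Poincar\'e bound (and a harmless slip writing $(C^2+1)$ where $(2C^2+1)$ would arise).
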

    \begin{proof}
    As we discussed before the statement of the theorem it is enough to show that $M=\{ G_1\leq r \}\subset K_r$ for some bounded set $K_r$ in $H^{1}(\Omega)$. The next observation is that upon adding $\norm{\nabla u}_{L^2}$ to the inequality (\ref{Poincare}) we find that 
    \begin{align}\label{ModifiedPoincare}
        \norm{u}_{H^{1}} \leq C(\norm{\nabla u}_{L^2} + 1)
    \end{align}
    hence it is enough to provide an $L^2$ bound on the gradients of the functions in $M$. We employ a scaled version of Young's inequality with fitting $\varepsilon>0$ and compute using the inequality (\ref{ModifiedPoincare})
    \begin{align*}
        r &\geq 
        \frac12\norm{\nabla u}^2_{L^2} 
        - 
        \norm{f}_{H^1(\Omega)'}\norm{u}_{H^{1,2}}
        \\
        &\geq
         \frac12\norm{\nabla u}^2_{L^2} 
        - 
        C\norm{f}_{H^1(\Omega)'}\norm{\nabla u}_{L^2}
        -
        C\norm{f}_{H^1(\Omega)'}
        \\
        &\geq
        \frac12\norm{\nabla u}^2_{L^2} 
        - 
        \varepsilon \norm{\nabla u}_{L^2}^2
        -
        (C(\varepsilon)+C)\norm{f}_{H^1(\Omega)'}
        \\
        &\geq
        \frac14\norm{\nabla u}^2_{L^2}
        -
        (C(\varepsilon)+C)\norm{f}_{H^1(\Omega)'}.
    \end{align*}
    Rearranging the inequality we see that $\norm{\nabla u}_{L^2}\leq C(r)$ which is sufficient to conclude the proof due to (\ref{ModifiedPoincare}).
    \end{proof}

\begin{corollary}
    Any sequence \((u_n)_{n\in\mathbb N}\) of 
    quasi-minimizers 
    of \((F_n)_{n\in\mathbb N}\) converges weakly in \(H^{1}(\Omega)\) and hence strongly in \(L^2(\Omega)\) to the solution $u$ of 
    \begin{align*}
        -\Delta u & = f \quad \text{in } \Omega \\
        u & = 0 \quad \text{on } \partial\Omega.
    \end{align*}
\end{corollary}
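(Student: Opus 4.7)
The plan is to harvest the three ingredients developed in the preceding pages and feed them into the abstract convergence machinery. Specifically, the $\Gamma$-convergence theorem gives $F_n \xrightarrow{\Gamma} F$ in the weak topology of $H^1(\Omega)$, the lemma gives that $F$ has a unique minimiser $u \in H^1_0(\Omega)$, and the equi-coercivity proposition shows that $(F_n)_{n\in\mathbb N}$ is equi-coercive. These are precisely the hypotheses of Corollary \ref{MinimiserCorollary}, so applying it to any sequence $(u_n)$ of quasi-minimisers yields $u_n \rightharpoonup u$ weakly in $H^1(\Omega)$.

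The remaining task is to identify the unique minimiser $u$ of $F$ with the weak solution of the Poisson problem. Here I would appeal to the standard variational calculus argument: since $F$ restricted to $H^1_0(\Omega)$ is the Dirichlet energy $\tfrac12\int_\Omega |\nabla v|^2\,\mathrm dx - f(v)$ and is Gâteaux-differentiable, its minimiser is characterised by the Euler--Lagrange equation
\[
    \int_\Omega \nabla u \cdot \nabla \varphi\,\mathrm dx \;=\; f(\varphi) \qquad \text{for all } \varphi \in H^1_0(\Omega),
\]
which is exactly the weak formulation of $-\Delta u = f$ with homogeneous Dirichlet data. This identification is essentially a one-line calculation and contains no real difficulty.

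Finally, the upgrade from weak $H^1$ convergence to strong $L^2$ convergence follows from the Rellich--Kondrachov compactness theorem, which applies since $\Omega$ is a bounded Lipschitz domain: the embedding $H^1(\Omega) \hookrightarrow L^2(\Omega)$ is compact, so weakly convergent sequences in $H^1(\Omega)$ converge strongly in $L^2(\Omega)$.

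There is no genuine obstacle in this argument — all the analytical work has been done in the preceding sections. The only point requiring minor care is the explicit identification of the abstract variational minimiser with the classical weak solution of Poisson's problem, but this is a textbook computation which could even be cited to \cite{struwe1990variational} together with the uniqueness lemma above.
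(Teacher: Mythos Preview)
Your proposal is correct and follows exactly the route the paper intends: the corollary is stated without proof precisely because it is the direct combination of the preceding $\Gamma$-convergence theorem, the uniqueness lemma, and the equi-coercivity proposition, fed into Corollary~\ref{MinimiserCorollary}, together with the standard identification of the Dirichlet minimiser with the weak Poisson solution (already noted before the setting, with reference to \cite{struwe1990variational}) and Rellich's compactness for the $L^2$ upgrade. There is nothing to add or correct.
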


In practice, one will not optimise the functionals \(F_n\) but rather work in the parameter space \(\Theta_n\) and use \(L_n\coloneqq F_n\circ R_\rho\) as an objective function for the optimisation. However, if \((\theta_n)_{n\in\mathbb N}\) is a sequence of quasi-minimisers of \((L_n)_{n\in\mathbb N}\), then \((R_\rho(\theta_n))_{n\in\mathbb N}\) is a sequence of quasi-minimisers of \((F_n)_{n\in\mathbb N}\) since \(\{F_n<\infty\} = R_\rho(\Theta_n)\) by definition. Hence, the previous result yields the convergence of \((R_\rho(\theta_n))_{n\in\mathbb N}\) towards the solution of the Poisson problem and we will fix this in the following.

\begin{theorem}\label{mainresult}
Let \(L_n\coloneqq F_n\circ R_\rho\) and \((\theta_n)_{n\in\mathbb N}\) be a sequence of quasi-minimisers of \((L_n)_{n\in\mathbb N}\). Then \((R_\rho(\theta_n))_{n\in\mathbb N}\) converges to the true solution of the Poisson problem, both weakly in \(H^1(\Omega)\) and strongly in \(L^2(\Omega)\).
\end{theorem}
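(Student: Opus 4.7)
The plan is to reduce this statement to the convergence of function-space quasi-minimisers that we already proved in the preceding corollary. The bridge is the identity $L_n = F_n \circ R_\rho$ together with the fact that $F_n$ is defined to be $+\infty$ on $H^1(\Omega) \setminus A_n$, where $A_n = R_\rho(\Theta_n)$.

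First I would check that the parameter-space optimisation is equivalent to the function-space optimisation over $A_n$. Since $F_n = +\infty$ off $A_n$,
\[
\inf_{u\in H^1(\Omega)} F_n(u) = \inf_{u \in A_n} F_n(u) = \inf_{\theta\in\Theta_n} L_n(\theta).
\]
Consequently, if $(\theta_n)_{n\in\mathbb N}$ is a $\delta_n$-quasi-minimiser sequence of $(L_n)$, setting $u_n \coloneqq R_\rho(\theta_n)$ yields
\[
F_n(u_n) = L_n(\theta_n) \le \inf L_n + \delta_n = \inf F_n + \delta_n,
\]
so $(u_n)_{n\in\mathbb N}$ is a $\delta_n$-quasi-minimiser sequence for $(F_n)$ in $H^1(\Omega)$.

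Second, I would combine the results established in this section: $(F_n)_{n\in\mathbb N}$ is equi-coercive by the previous proposition, $\Gamma$-converges to the Dirichlet energy $F$ with zero boundary values by the $\Gamma$-convergence theorem, and $F$ has a unique minimiser $u^\star$, namely the weak solution of the Poisson problem. Corollary \ref{MinimiserCorollary} then implies $u_n \rightharpoonup u^\star$ in $H^1(\Omega)$. Strong $L^2$ convergence is then automatic from the compact embedding $H^1(\Omega) \hookrightarrow L^2(\Omega)$ provided by the Rellich--Kondrachov theorem, since $\Omega$ is a bounded Lipschitz domain.

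This theorem is really a packaging result, collecting the outputs of the $\Gamma$-convergence theorem, the equi-coercivity proposition and the uniqueness lemma via the minimiser corollary. Accordingly there is no substantive obstacle to overcome; the only point that deserves attention is the equality $\{F_n < \infty\} = R_\rho(\Theta_n)$, which is what makes the parameter-space quasi-minimisation inequality transfer verbatim to the function space, after which the earlier machinery does all the work.
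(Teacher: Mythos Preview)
Your argument is correct and matches the paper's approach exactly: the paper simply observes that since $\{F_n<\infty\}=R_\rho(\Theta_n)$ by definition, quasi-minimisers of $L_n$ realise to quasi-minimisers of $F_n$, after which the preceding corollary (using equi-coercivity, $\Gamma$-convergence, and uniqueness of the minimiser of $F$) yields weak $H^1$ convergence, with strong $L^2$ convergence following from Rellich.
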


\subsection{Generalisation to abstract energy functionals}
The above considerations admit a direct extension to a considerably broader class of energies, including energies associated to higer order elliptic equations and also non-linear ones such as the $p$-Dirichlet energy, the Euler-Lagrange equations of which is the $p$-Laplace (cf. \cite{struwe1990variational}). In this section the activation function $\rho:\mathbb{R}\to\mathbb{R}$ is no longer assumed to be ReLU but needs to be chosen according to the energy.
\begin{setting}\label{AbstractSetting}
We begin with the assumptions and the norm structure on the space where the energy will live. Assume $X$  is a reflexive Banach space with norm $\norm{\,\cdot\,}_X$ and that there is an additional norm $|\cdot|$ on $X$, which does not need to render $X$ complete. Furthermore let $Y$ be another Banach space with norm $\norm{\,\cdot\,}_Y$ and let 
\( T:(X,\norm{\cdot}_X)\to (Y,\norm{\cdot}_Y) \) be a linear and continuous operator which is linked to the norm $\norm{\,\cdot\,}_X$ in the following way
\[ \norm{x}_X = |x| + \norm{Tx}_Y \quad\text{for all }x\in X. \]
As for the activation function the only thing we require for the moment is that $\bigcup_{n\in\mathbb N}R_\rho(\Theta_n)\subset X$ and we set again $A_n\coloneqq R_\rho(\Theta_n)$. Of course this implies that our Banach space $X$ is some kind of function space.
We turn to the energy and some abstract analogue of boundary values. We assume that there are two maps
\[ E:X\to\mathbb{R} \quad\text{and}\quad \gamma:X\to B \]
where $E$ (the energy) is bounded from below, weakly lower semi-continuous and norm-continuous. Both, the weak topology related to the weak lower semi-continuity and the norm-continuity are meant with respect to the norm $\norm{\cdot}_X$. Furthermore $\gamma$ (trace operator) is a linear and continuous map from $(X,\norm{\cdot})$ into the Banach space $B$ that is the abstract analogue of boundary values. We set $X_0 = \ker(\gamma)$. With this terminology fixed we are able to define our functionals $F_n, F:X\to(-\infty,\infty]$. Let $f\in X'$ and set
    \begin{align*}
        F_n(x)
        =
        \begin{cases}
	    \; E(x) + n\norm{\gamma(x)}^2_B - f(x) 
	    &x\in A_n,
	    \\[.2cm]
	    \; \infty   &x\notin A_n.
	    \end{cases}
    \end{align*}
    and for the limit functional
    \begin{align*}
        F(x)
        =
        \begin{cases}
	    \; E(x) - f(x) 
	    &x\in X_0,
	    \\[.2cm]
	    \; \infty   &x\notin X_0.
	    \end{cases}
    \end{align*}
\end{setting}
To get an intuition for the setting think of $X=H^1(\Omega)$ where $|\cdot|=\norm{\,\cdot\,}_{L^2}$ and $\norm{\,\cdot\,}_X=\norm{\,\cdot\,}_{H^1}$ such as $T:H^1(\Omega)\to L^2(\Omega)^N$ with $u\mapsto \nabla u$.
The question we ask now is: 
\begin{gather*}
    \text{\emph{Under which assumptions do we obtain the $\Gamma$-convergence of \((F_n)_{n\in\mathbb N}\) to \(F\) 
    in the weak topology of $X$?}}
\end{gather*}{}
To this end we formulate the hypothesis below.
\begin{enumerate}
    \item[(H1)]\label{AssumptionDensity} The set $\bigcup_{n\in\mathbb N}R_\rho(\Theta_n)\cap X_0$ is dense in $X_0$ with respect to the norm topology on $(X_0,\norm{\,\cdot\,}_X)$.
    \item [(H2)] The space $X$ is reflexive, its norm is given as $\norm{\,\cdot\,}_X = |\cdot| + \norm{T\,\cdot\,}_Y$, for some other norm $|\cdot|$ on $X$ and $T:(X,\norm{\cdot}_X)\to (Y,\norm{\cdot}_Y)$ is linear and continuous into the Banach space $Y$.
    \item[(H3)] The identity $(X,\norm{\,\cdot\,}_X\to(X,|\cdot|)$ maps weakly convergent sequences to strongly convergent ones.
    \item[(H4)] The map $\gamma$ is linear and continuous and for every $r\in\mathbb{R}$ the set $\{ \norm{\gamma(x)}^2_B-f(x) \leq r \}\cap \ker(T)$ is bounded in $X$ by a constant $C=C(r)<\infty$.
    \item[(H5)] The energy $E:X\to\mathbb{R}$ is bounded form below, weakly lower semi-continuous (with respect to the weak topology induced by $\norm{\cdot}_X$) and also $\norm{\cdot}_X$-continuous and satisfies $|E(x)|\geq c_1\norm{Tx}_Y^p-c_2$ for $p>1$ and constants $c_1,c_2$. Furthermore assume that $E$ has a unique minimiser on $X_0$.
\end{enumerate}
We can formulate our main result, its proof is very similar to the case of the Dirichlet energy 
\begin{theorem}
    Under the hypotheses \textup{(H1)}-\textup{(H5)} the sequence \((F_n)_{n\in\mathbb N}\) of functionals \(\Gamma\)-converges towards \(F\) and further every sequence of quasi-minimisers of \((F_n)_{n\in\mathbb N}\) converges to the unique minimiser of $F$.
    \begin{proof}
        The proof mainly consists of abstraction of the concepts we already met in the Dirichlet case and therefore we will keep it brief. A look back to the shows that (H1), together with the strong continuity of $E$ from (H5) provides a recovery sequence and the $\liminf$ inequality follows by the lower semi-continuity assumed in (H5).
        The hypothesis (H2)-(H4) take care of the equi-coercivity which deserves some extra comments and we refer the reader to the next lemma. 
    \end{proof}
\end{theorem}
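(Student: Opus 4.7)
The plan is to follow the Dirichlet-case proof with each concrete ingredient replaced by the corresponding abstract hypothesis, and to isolate equi-coercivity as the one part that genuinely needs a separate compactness argument. For the \emph{liminf inequality} I would take $x_n \rightharpoonup x$ in $X$ and split on whether $x$ lies in $X_0 = \ker\gamma$. If $x \in X_0$, I drop the nonnegative boundary penalty, use the weak lower semicontinuity of $E$ from (H5) together with weak continuity of $f \in X'$ to conclude
\[
\liminf_{n\to\infty} F_n(x_n) \;\ge\; E(x) - f(x) \;=\; F(x).
\]
If $x \notin X_0$, then since $\gamma$ is linear and continuous by (H4), it is also weakly continuous, so $\gamma(x_n) \rightharpoonup \gamma(x) \neq 0$ in $B$; weak lower semicontinuity of $\norm{\cdot}^2_B$ yields $\liminf \norm{\gamma(x_n)}^2_B > 0$, and the prefactor $n$ together with boundedness below of $E$ (from (H5)) and boundedness of $f(x_n)$ forces $\liminf F_n(x_n) = \infty = F(x)$.

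For the \emph{recovery sequence}, when $x \in X_0$ I would invoke (H1) to extract $x_n \in A_n \cap X_0$ with $\norm{x_n - x}_X \to 0$; then $\gamma(x_n)=0$ kills the boundary term, and norm-continuity of $E$ from (H5) together with continuity of $f$ gives $F_n(x_n) \to E(x)-f(x)=F(x)$. For $x \notin X_0$ the constant sequence $x_n = x$ trivially recovers $F(x)=\infty$.

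The main obstacle is the \emph{equi-coercivity}. Here the strategy is to abstract the two-step argument from the Dirichlet case. Define $G_1(x) \coloneqq E(x) + \norm{\gamma(x)}^2_B - f(x)$; since $F_n \ge G_1$ on $A_n$ and $F_n = \infty$ otherwise, it suffices to bound sublevel sets of $G_1$. The first step, deferred to the next lemma, is to prove an abstract Poincaré-type inequality
\[
\norm{x}_X \;\le\; C\bigl(\norm{Tx}_Y + 1\bigr)
\]
on $\{G_1 \le r\}$ by a blow-up contradiction: if it failed, one rescales to a sequence $v_k$ in the sublevel set with $\norm{v_k}_X$ bounded and $\norm{T v_k}_Y \to 0$, extracts a weak limit using reflexivity (H2), upgrades to strong convergence in $|\cdot|$ via (H3) (playing the role of Rellich), identifies the limit as an element of $\ker T$, and then uses the uniform bound on $\ker T$ guaranteed by (H4) to reach a contradiction with the chosen blow-up scale. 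The second step is to feed this inequality, together with the growth bound $|E(x)| \ge c_1\norm{Tx}^p_Y - c_2$ with $p>1$ from (H5), into a scaled Young inequality to absorb the linear $\norm{f}_{X'}\norm{x}_X$ term and derive a uniform bound on $\norm{Tx}_Y$, whence on $\norm{x}_X$ via the Poincaré-type inequality.

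Having $\Gamma$-convergence and equi-coercivity, and knowing from (H5) that $F$ admits a unique minimiser on $X_0$, I would conclude by a direct application of Corollary \ref{MinimiserCorollary}, which delivers weak convergence of every quasi-minimiser sequence to this minimiser without further work.
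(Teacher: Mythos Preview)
Your proposal is correct and follows precisely the route sketched in the paper's own proof: (H1) plus the norm-continuity of $E$ in (H5) for the recovery sequence, weak lower semicontinuity from (H5) (together with the boundary blow-up when $x\notin X_0$) for the liminf inequality, and the abstract Poincar\'e argument driven by (H2)--(H4) for equi-coercivity, with the conclusion delivered by Corollary~\ref{MinimiserCorollary}. The only difference is that you spell out each step in full where the paper merely points back to the Dirichlet case; there is no substantive divergence in strategy.
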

\begin{lemma}[Abstract Poincar\'e Inequality]
    Consider the setting described above i.e., let $(X,\norm{\cdot}_X)$ and $(Y,\norm{\cdot}_Y)$ be Banach spaces with a linear and continuous map $T:X\to Y$ and another norm $\lvert\,\cdot\,\rvert$ on $X$ such that $\norm{x}_X=|x|+\norm{Tx}_Y$ and the identity $(X,\norm{\cdot}_X)\to(X,\lvert\,\cdot\,\rvert)$ maps weakly to strongly convergent sequences. Let $M$ be some weakly closed, star-shaped set with center zero. Then $\ker(T)\cap M$ is bounded if and only if there is a constant $C$ such that
    \[ |x|\leq C(\norm{Tx}_Y+1)\quad\textup{for all }x\in M. \]
    \begin{proof}
    This works exactly as in the case $X=H^1(\Omega)$ and $Y=L^2(\Omega)^d$ with $T=\nabla$ which we studied before.
    \end{proof}
\end{lemma}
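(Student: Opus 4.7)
The plan is to obtain both directions of the biconditional by mirroring the two-step argument of the concrete Dirichlet case, replacing the constants (which formed $\ker(\nabla)$) by arbitrary elements of $\ker(T)$ and the Rellich compact embedding by the hypothesis that the identity $(X,\norm{\cdot}_X)\to(X,|\cdot|)$ maps weakly convergent sequences to strongly convergent ones. The easy direction is the forward one: if the inequality $|x|\leq C(\norm{Tx}_Y+1)$ holds on all of $M$, then for any $x\in\ker(T)\cap M$ the term $\norm{Tx}_Y$ vanishes, giving $|x|\leq C$; combined with the identity $\norm{x}_X=|x|+\norm{Tx}_Y=|x|$ on $\ker(T)$, this bounds $\ker(T)\cap M$ in the $X$-norm.

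For the reverse implication I argue by contradiction, copying Step 2 of the concrete proof almost verbatim. Assume the Poincar\'e-type inequality fails, so for each $k$ there is $u_k\in M$ with $|u_k|>k(\norm{Tu_k}_Y+1)$; in particular $|u_k|\to\infty$. Fix an arbitrary $R>0$ and, for $k$ so large that $R/|u_k|\leq 1$, set $v_k\coloneqq(R/|u_k|)\,u_k$. Star-shapedness of $M$ with center zero keeps $v_k$ inside $M$, while $|v_k|=R$ and $\norm{Tv_k}_Y=(R/|u_k|)\norm{Tu_k}_Y\leq R/k$, so $(v_k)$ is norm-bounded in $X$. By reflexivity extract a subsequence $v_{k_j}\rightharpoonup v$; weak closedness of $M$ keeps $v\in M$, and continuity of $T$ together with $\norm{Tv_{k_j}}_Y\to 0$ forces $Tv=0$, hence $v\in\ker(T)$. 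The assumption on the identity map then upgrades the weak convergence to $|v_{k_j}-v|\to 0$, so $|v|=R$ and thus $\norm{v}_X=|v|=R$. Since $R$ was arbitrary, $\ker(T)\cap M$ is unbounded, contradicting the hypothesis.

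The step I expect to be the main obstacle is the last one, namely the identification $|v|=R$. The norm bound alone yields only a weakly convergent subsequence and, without the weak-to-strong property of $(X,\norm{\cdot}_X)\to(X,|\cdot|)$, one could only conclude $|v|\leq R$ by weak lower semicontinuity and in particular could not rule out $|v|=0$, which would collapse the contradiction. This weak-to-strong property is precisely the abstract counterpart of the Rellich compact embedding of $H^1(\Omega)$ into $L^2(\Omega)$ used in the concrete case, and it is the one place where the full force of the structural hypotheses on $X$, $|\cdot|$ and $T$ is really needed.
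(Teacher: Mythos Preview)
Your proposal is correct and follows exactly the route the paper intends: it is the direct abstraction of the two-step argument from the concrete $H^1$ case, with $\ker(T)$ playing the role of the constant functions and the weak-to-strong hypothesis replacing Rellich's theorem. The only point worth noting is that you tacitly invoke reflexivity of $X$ when extracting a weakly convergent subsequence; this is legitimate since the lemma explicitly places itself in ``the setting described above'', which includes reflexivity via (H2).
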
{}
\subsubsection*{The \(p\)-Laplace: An example for a nonlinear PDE}
We illustrate the abstract setting briefly by considering the $p$-Dirichlet energy for $p\in (1,\infty)$ which is given by
\begin{align*}
    E:W^{1,p}(\Omega)\to\mathbb{R}
    \quad\text{with}\quad
    E(u) 
    =
    \frac1p\int_\Omega|\nabla u|^p\,dx
\end{align*}
Note that for $p\neq 2$ the associated Euler-Lagrange equation is nonlinear. This PDE is called the $p$-Laplace equation and, in strong formulation, is given by 
\begin{align*}
    -\operatorname{div}(\lvert\nabla u\rvert^{p-2}\nabla u) &= f \quad\text{in }\Omega
    \\
    u &= 0 \quad\text{on }\partial\Omega,
\end{align*}
compare also to \cite[First Chapter]{struwe1990variational}. Choosing the ReLU activation function, the abstract setting presented in (\ref{AbstractSetting}) is applicable in this case by the following choices
\[ X = W^{1,p}(\Omega), \quad Y = L^p(\Omega)^d, \quad B = L^p(\partial\Omega), \quad |u| = \norm{u}_{L^p(\Omega)}\]
as well as
\begin{align*}
    \gamma = \operatorname{tr}:W^{1,p}(\Omega) & \to L^p(\partial\Omega)
    \\
    T:W^{1,p}(\Omega) & \to L^p(\Omega)^d\quad\text{with}\quad u\mapsto\nabla u.
\end{align*}
Clearly (H1), (H2) and (H5) are fulfilled and (H3) is due to Rellich's embedding theorem. We look at (H4) and need to guarantee that for every $f\in W^{1,p}(\Omega)'$ and $r>0$ the following set is bounded in $W^{1,p}(\Omega)$
\begin{align*}
    \left\{
    \norm{\operatorname{tr}(u)}^2_{L^p(\partial\Omega)}-f(u) \leq r
    \right\}.
\end{align*}{}
This works similar to the case $p=2$ discussed above and uses again a scaled version of Young's inequality. 
\section{Conclusions and further research}

We established a $\Gamma$-convergence result which ensures that ReLU networks can be optimised to approximately solve a large class of stationary PDEs that are Euler-Lagrange equations of variational energies. This can be seen as the first step into the theoretical justification of the deep ritz method introduced in \cite{weinan2018deep}. However, the result is to be enjoyed with a grain of salt and we want to touch on a few limitations. 
\begin{enumerate}
    \item It does not take into account the numerical approximation of the Dirichlet energy which is a further potential source of error. Further, the resulting numerical optimisation problem is not studied.
    \item It does not provide any convergence rates and is of purely qualitative nature, hence it can not explain why NNs should be superior to traditional methods.
\end{enumerate}

For future research we propose the following questions:
\begin{enumerate}
    \item Treat the \(p\)-Laplace with this approach.
    \item Work towards higher order problems; in particular one has to abandon the ReLU activation function since ReLU networks are no longer contained in the considered function spaces.
    \item Show \(\Gamma\)-convergence for numerical approximations of the regularised Dirichlet energies; this would show that – up to optimisation of the networks – the deep Ritz method is consistent.
    \item Establish convergence rates.
\end{enumerate}

\bibliographystyle{apalike}
\bibliography{deep-ritz}

\appendix
\section*{Appendix}
\begin{lemma}
    Let $\varphi\in C_c^\infty(\mathbb{R}^d)$ be a smooth function with compact support. Then for every $\varepsilon>0$ there is a piecewise linear function $s_\varepsilon$ (see Definition \ref{PiecewiseLinear}) such that for all $p\in[1,\infty]$ it holds
    \begin{align*}
        \norm{s_\varepsilon-\varphi}_{W^{1,p}(\mathbb R^d)}<\varepsilon
        \quad \text{and}\quad
        \operatorname{supp}(s_\varepsilon)
        \subset
        \operatorname{supp}(\varphi) + B_{\varepsilon}(0).
    \end{align*}
    \begin{proof}
        In the following we will denote by $\norm{\cdot}_\infty$ the uniform norm on $\mathbb R^d$. To show the assertion choose a triangulation $\mathcal{T}$ of $\mathbb{R}^d$ of width $\delta=\delta(\varepsilon) >0$, consisting of rotations and translations of one non-degenerate simplex $K$. We choose $s_\varepsilon$ to agree with $\varphi$ on all vertices of elements in $\mathcal{T}$. Since $\varphi$ is compactly supported it is uniformly continuous and hence it is clear that $\norm{\varphi-s_\varepsilon}_\infty <\varepsilon$ if $\delta$ is chosen small enough.
        \ \\
        To prove convergence of the gradients we show that also $\norm{\nabla \varphi - \nabla s_\varepsilon}_{\infty}<\varepsilon$ which will be shown on one element $K\in\mathcal{T}$ and as the estimate is independent of $K$ is understood to hold on all of $\mathbb{R}^d$.
        So let $K\in\mathcal{T}$ be given and denote its vertices by $x_1,\dots,x_{n+1}$. We set $v_i = x_{i+1}-x_1$, $i=1,\dots,d$ to be the vectors spanning $K$. By the one dimensional mean value theorem we find $\xi_i$ on the line segment joining $x_1$ and $x_i$ such that
        \[
            \partial_{v_i}s_\varepsilon(v_1) = \partial_{v_i}\varphi(\xi_i).
        \]
        Note that $\partial_{v_i}s_\varepsilon$ is constant on all of $K$ where it is defined. Now for arbitrary $x\in K$ we compute with setting $w=\sum_{i=1}^d\alpha_iv_i$ for $w\in\mathbb{R}^d$ with $|w|\leq 1$. Note that the $\alpha_i$ are bounded uniformly in $w$, where we use that all elements are the same up to rotations and translations.
        \begin{align*}
            \norm{\nabla\varphi(x) - \nabla s_\varepsilon(x)}
            &=
            \sup_{|w|\leq1}\norm{\nabla\varphi(x)w-\nabla s_\varepsilon(x)w}
            \\
            &\leq
            \sup_{|w|\leq 1} \sum_{i=1}^d |\alpha_i|
            \underbrace{
            \norm{\partial_{v_i}\varphi(x) - \partial_{v_i}s_\varepsilon(x) } 
            }_{=(*)}
        \end{align*}
        where again $(*)$ is uniformly small due to the uniform continuity of $\nabla\varphi$. This concludes the proof.
    \end{proof}
\end{lemma}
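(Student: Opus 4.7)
The plan is to construct $s_\varepsilon$ as the continuous piecewise-linear nodal interpolant of $\varphi$ on a triangulation $\mathcal{T}_\delta$ of $\mathbb{R}^d$ whose simplices are rotated and translated copies of a single non-degenerate reference simplex, with mesh size $\delta = \delta(\varepsilon)$ to be fixed later. The overall strategy is to obtain $L^\infty$ estimates on $\varphi - s_\varepsilon$ and $\nabla\varphi - \nabla s_\varepsilon$ that are uniformly small as $\delta \to 0$; because both $\varphi$ and $s_\varepsilon$ will be supported in the bounded set $K_\delta := \operatorname{supp}(\varphi) + B_\delta(0)$, any $L^p$ norm on $\mathbb{R}^d$ is dominated by the $L^\infty$ norm times a $p$-independent multiple of $\max(1, |K_\delta|)$, so the conclusion for every $p \in [1,\infty]$ drops out of a single $L^\infty$ argument.

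For the function values, uniform continuity of $\varphi \in C_c^\infty(\mathbb{R}^d)$ gives $\norm{\varphi - s_\varepsilon}_{L^\infty} < \varepsilon$ whenever $\delta$ is sufficiently small: the interpolant agrees with $\varphi$ at the vertices and is affine in between, so its error on each simplex is bounded by the modulus of continuity of $\varphi$ at scale $\delta$. For the gradients I fix a simplex $K \in \mathcal{T}_\delta$ with vertices $x_1, \dots, x_{d+1}$ and edge vectors $v_i = x_{i+1} - x_1$. The one-dimensional mean value theorem applied to $t \mapsto \varphi(x_1 + t v_i)$ yields $\partial_{v_i} s_\varepsilon(x) = \varphi(x_{i+1}) - \varphi(x_1) = \partial_{v_i} \varphi(\xi_i)$ for some $\xi_i$ on the edge $[x_1, x_{i+1}]$, a value that is constant in $x \in K$. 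Decomposing any unit vector $w \in \mathbb{R}^d$ as $w = \sum_i \alpha_i v_i$ and using that all simplices are rigid motions of a single reference simplex, the coefficients $|\alpha_i|$ are bounded by a constant independent of $K$ and $w$, hence
\[ \norm{\nabla\varphi(x) - \nabla s_\varepsilon(x)} \le C \max_i \lvert \partial_{v_i}\varphi(x) - \partial_{v_i}\varphi(\xi_i)\rvert, \]
and uniform continuity of $\nabla\varphi$ pushes the right-hand side below $\varepsilon$ once $\delta$ is small enough.

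The support condition is handled in parallel: simplices whose vertices all lie outside $\operatorname{supp}(\varphi)$ carry $s_\varepsilon \equiv 0$, and any simplex that does meet $\operatorname{supp}(\varphi)$ is contained in $\operatorname{supp}(\varphi) + B_\delta(0)$, so choosing $\delta < \varepsilon$ yields the required inclusion. The one place where care is needed is the gradient estimate: the boundedness of the decomposition coefficients $\alpha_i$ depends on shape regularity of the triangulation, which is why the simplices must be rigid motions of a single reference simplex rather than an arbitrary triangulation of diameter $\delta$. On a degenerating family of simplices the edge basis $\{v_1,\dots,v_d\}$ would become ill-conditioned and the gradient bound would fail even though the nodal values still match, and avoiding this is the main obstacle the construction has to address.
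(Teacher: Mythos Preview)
Your proposal is correct and follows essentially the same approach as the paper: nodal interpolation on a triangulation built from rigid motions of a single reference simplex, uniform continuity for the $L^\infty$ bound on $\varphi - s_\varepsilon$, and the mean value theorem along the edge vectors $v_i$ together with the uniformly bounded coefficients $\alpha_i$ in $w = \sum_i \alpha_i v_i$ for the gradient bound. You actually spell out two points the paper leaves implicit, namely the passage from the $L^\infty$ estimate to the $W^{1,p}$ estimate via the bounded support and the verification of the support inclusion $\operatorname{supp}(s_\varepsilon) \subset \operatorname{supp}(\varphi) + B_\varepsilon(0)$.
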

\end{document}